\DeclareMathAlphabet{\varmathbb}{U}{pxsyb}{m}{n}
\def\leq{\leqslant}
\def\geq{\geqslant}
\def\phi{\varphi}
\def\tilde{\widetilde}
\def\kappa{\varkappa}
\newcommand{\D}{\mathrm{d}\kern0.2pt}%
\newcommand{\E}{\mathrm{e}\kern0.2pt} 
\newcommand{\ii}{\kern0.05em\mathrm{i}\kern0.05em}
\newcommand{\RR}{\mathbb{R}}%
\newtheorem{theorem}{\bf \indent Theorem}[section]
\newtheorem{corollary}{\bf \indent Corollary}[section]
\theoremstyle{remark}
\newtheorem{conjecture}{\bf \indent Conjecture}[section]
\numberwithin{equation}{section}
\begin{document}

\noindent {\Large \bf A characterisation of harmonic functions by quadrature \\[3pt]
identities of annular domains and related results}

\vskip5mm

{\bf Nikolay Kuznetsov}

\vskip-2pt {\small Laboratory for Mathematical Modelling of Wave Phenomena}
\vskip-4pt {\small Institute for Problems in Mechanical Engineering, RAS} \vskip-4pt
{\small V.O., Bol'shoy pr. 61, St Petersburg 199178, Russia \vskip-4pt {\small
nikolay.g.kuznetsov@gmail.com}

\vskip7mm

\parbox{146mm} {\noindent A new characterization of harmonic functions is obtained.
It is based on quadrature identities involving mean values over annular domains and
over concentric spheres lying within these domains or on their boundaries. The
analogous result with a logarithmic weight in the volume means is conjectured. The
similar characterization is derived for real-valued solutions of the modified
Helmholtz equation (panharmonic functions), whereas for solutions of the Helmholtz
equation (metaharmonic functions) the analogous result is just outlined. Weighted
mean value identities are also obtained for solutions of these equations in
two-dimensions. The dependence of coefficients in these identities on a logarithmic
weight is analysed and characterizations of these functions are conjectured.}

\vskip4mm

{\centering \section{Introduction} }

\noindent 

Studies of mean value properties of harmonic functions date back to the Gauss
theorem of the arithmetic mean over spheres (see \cite{G}, Article~20):

\begin{theorem}
Let $u \in C^2 (D)$ be harmonic in a domain $D \subset \RR^m$, $m \geq 2$. Then for
every $x \in D$ the equality $M^\circ (S_r (x), u) = u (x)$ holds for each
admissible sphere~$S_r (x)$.
\end{theorem}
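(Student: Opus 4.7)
The plan is to show that the spherical mean is a constant function of $r$ on the admissible range and then to pin down the value of that constant by continuity. Setting $R := \mathrm{dist}(x, \partial D)$ and $\phi(r) := M^\circ(S_r(x), u)$ for $0 < r < R$, my first step is to rewrite $\phi$ as an integral over a fixed domain via the change of variables $y = x + r\xi$:
$$\phi(r) = \frac{1}{|S_1(0)|}\int_{S_1(0)} u(x + r\xi)\, \mathrm{d}S(\xi).$$

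Since $u \in C^2(D)$, I would then differentiate under the integral sign on every interval $[r_0,r_1]\subset(0,R)$ and convert the result back to a surface integral on $S_r(x)$, obtaining
$$\phi'(r) = \frac{1}{|S_r(x)|}\int_{S_r(x)} \frac{\partial u}{\partial \nu}\, \mathrm{d}S,$$
with $\nu$ the outward unit normal. Applying the divergence theorem to the ball $\overline{B_r(x)} \subset D$, which is available by admissibility, transforms this into a volume integral of $\Delta u$ over $B_r(x)$; harmonicity of $u$ then forces $\phi'(r) \equiv 0$, so $\phi$ is constant on $(0, R)$.

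To identify the constant, my plan is to pass to the limit $r \to 0^+$: uniform continuity of $u$ on a compact neighbourhood of $x$ gives $u(x + r\xi) \to u(x)$ uniformly in $\xi \in S_1(0)$, and hence $\phi(r) \to u(x)$. Combined with $\phi' \equiv 0$ this yields $\phi(r) = u(x)$ for every admissible $r$, which is the desired identity.

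No substantial obstacle is anticipated in this classical setting. The only two points requiring care are the legitimacy of differentiating under the integral sign, which follows from $u\in C^2(D)$ together with compactness of $[r_0,r_1]\times S_1(0)$, and the correct identification of $\xi = (y-x)/r$ as the outward unit normal at $y \in S_r(x)$; both are routine once admissibility of the sphere provides a positive buffer between $\overline{B_r(x)}$ and $\partial D$.
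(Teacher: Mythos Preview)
Your argument is the standard and correct proof of Gauss's mean value theorem: differentiate the spherical mean in $r$ after pulling back to the unit sphere, identify the derivative with the flux of $\nabla u$ through $S_r(x)$, kill it via the divergence theorem and harmonicity, and fix the constant by continuity at $r=0$. All the steps are sound, and the two technical points you flag (differentiation under the integral sign, identification of $\xi$ with the outward normal) are handled correctly.

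There is nothing to compare against in the paper itself: Theorem~1.1 is quoted there as a classical result attributed to Gauss and is not given a proof. It serves only as background for the paper's actual contributions (Theorems~1.3, 4.3, etc.). So your proposal does not overlap or conflict with anything the paper proves; it simply supplies the omitted classical argument, and it does so correctly.
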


Here and below the following notation and terminology are used. The open ball of
radius $r$ centred at $x$ is $B_r (x) = \{ y : |y-x| < r \}$; it is called
admissible with respect to a domain $D$ provided $\overline{B_r (x)} \subset D$, and
$S_r (x) = \partial B_r (x)$ is the corresponding admissible sphere. If $u \in C^0
(D)$, then its spherical mean value over $S_r (x) \subset D$ is
\begin{equation}
M^\circ (S_r (x), u) = \frac{1}{|S_r|} \int_{S_r (x)} u (y) \, \D S_y =
\frac{1}{\omega_m} \int_{S_1 (0)} u (x +r y) \, \D S_y \, ; \label{sm}
\end{equation}
here $|S_r| = \omega_m r^{m-1}$ and $\omega_m = 2 \, \pi^{m/2} / \Gamma (m/2)$ is
the total area of the unit sphere (as usual $\Gamma$ stands for the Gamma function),
and $\D S$ is the surface area measure.

Integrating $M^\circ (S_r (x), u)$ with respect to $r$, one obtains the following
mean value property over balls.

\begin{theorem}
Let $u \in C^2 (D)$ be harmonic in a domain $D \subset \RR^m$, $m \geq 2$. Then for
every $x \in D$ the equality $M^\bullet (B_r (x), u) = u (x)$ holds for each
admissible ball~$B_r (x)$.
\end{theorem}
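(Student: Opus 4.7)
The plan is to reduce Theorem 1.2 to Theorem 1.1 by the slicing of the ball into concentric spheres, exactly as suggested by the sentence preceding the statement. The natural definition of the volume mean is
\begin{equation*}
M^\bullet(B_r(x), u) = \frac{1}{|B_r|} \int_{B_r(x)} u(y) \, \D y,
\end{equation*}
where $|B_r| = \omega_m r^m / m$. First I would rewrite the integral over the ball using spherical coordinates centred at $x$, so that
\begin{equation*}
\int_{B_r(x)} u(y) \, \D y = \int_0^r \!\! \int_{S_\rho (x)} u(y) \, \D S_y \, \D \rho = \int_0^r |S_\rho| \, M^\circ(S_\rho(x), u) \, \D \rho,
\end{equation*}
which is legitimate because $u \in C^0(D)$ and the slicing formula applies.

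Next I would observe that since $\overline{B_r(x)} \subset D$, every sphere $S_\rho(x)$ with $0 < \rho \le r$ is admissible. Theorem 1.1 therefore yields $M^\circ(S_\rho(x), u) = u(x)$ for each such $\rho$. Substituting this constant value into the iterated integral and using $|S_\rho| = \omega_m \rho^{m-1}$ gives
\begin{equation*}
\int_{B_r(x)} u(y) \, \D y = u(x) \int_0^r \omega_m \rho^{m-1} \, \D \rho = u(x) \, \frac{\omega_m r^m}{m} = u(x) \, |B_r| \, .
\end{equation*}
Dividing by $|B_r|$ delivers the asserted identity $M^\bullet(B_r(x), u) = u(x)$.

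There is essentially no obstacle in this argument; the only item worth mentioning is a justification of the polar slicing, which follows from Fubini's theorem since $u$ is continuous on the compact set $\overline{B_r(x)}$ and hence bounded and integrable there. No additional regularity beyond what Theorem 1.1 already requires is needed.
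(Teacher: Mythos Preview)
Your proof is correct and follows exactly the approach the paper indicates in the sentence preceding Theorem~1.2: you integrate the spherical mean identity of Theorem~1.1 over the radius using polar slicing, which is precisely what ``integrating $M^\circ (S_r (x), u)$ with respect to $r$'' means. There is nothing to add.
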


The volume mean of a locally Lebesgue integrable $u$ is defined similarly to
\eqref{sm}:
\begin{equation}
M^\bullet (D', u) = \frac{1}{|D'|} \int_{D'} u (y) \, \D y \, ; \label{bm}
\end{equation}
here $D'$ is a measurable subset of $D$ and $|D'|$ is its volume.

Various improvements of Theorems 1.1 and 1.2 and related results are reviewed in
\cite{NV}. The recent survey \cite{Ku1} complements \cite{NV} with old and new
properties of mean values of harmonic functions not covered by Netuka and Vesel\'y.
However, both papers practically do not touch upon quadrature identities (see
\cite{Gu}, p.~211, for their definition) other than those involving spheres and
balls, whereas several authors investigated such identities, in particular, for
annular domains in two and higher dimensions.

Namely, a combination of \eqref{bm}, in which $D'$ is the following annular domain
\[ A (r_1, r_2) = \{ x \in \RR^m \,: r_1 < |x| < r_2 \} , \quad 0 \leq r_1 < r_2
< \infty ,
\]
and the mean value over sphere appears in the quadrature identity
\begin{equation} 
M^\bullet (A (r_1, r_2), u) = M^\circ (S_{r_*} (0), u) \, . \label{ag}
\end{equation}
It was obtained by Armitage and Goldstein \cite{AG} for any function $u$ harmonic in
$A (r_1, r_2)$ and integrable over this domain; they considered the case $m \geq 3$
when
\begin{equation}
r_* = \left( \frac{2}{m} \frac{r_2^m - r_1^m}{r_2^2 - r_1^2} \right)^{1/(m-2)} .
\label{r*}
\end{equation}
If $m = 2$, then identity \eqref{ag} is valid with
\begin{equation}
r_* = \exp \left( \frac{r_2^2 \log r_2 - r_1^2 \log r_1}{r_2^2 - r_1^2} - \frac{1}{2}
\right) \label{r*2}
\end{equation}
as was established by Sakai \cite{S}. Both expressions \eqref{r*} and \eqref{r*2}
are such that $r_* \in (r_1, r_2)$.

Moreover, it occurs that identity \eqref{ag} with $A (r_1, r_2)$ changed to $D$
characterises annuli in the class of domains containing $S_{r_*} (0)$ inside and
bounded by more than one surface (curve when $m=2$); see \cite{GS} for the precise
formulation and proof.

These results naturally lead to the following observation. Let $u$ be harmonic in a
domain containing $A (r_1, r_2)$ inside and having the connected complement, then
Theorem 1.2 implies that
\[ |B_{r_2} (0)| \, u (0) = \int_{B_{r_2} (0)} u (y) \, \D y \quad \mbox{and} \quad
|B_{r_1} (0)| \, u (0) = \int_{B_{r_1} (0)} u (y) \, \D y \, .
\]
By subtraction we obtain that $M^\bullet (A (r_1, r_2), u) = u (0)$. On the other
hand, Theorem~1.1 implies that $M^\circ (S_{r} (0), u) = u (0)$ for every $r \in (0,
r_2)$. Hence the quadrature identity
\begin{equation}
M^\bullet (A (r_1, r_2), u) = M^\circ (S_{r} (0), u) \label{A=S0}
\end{equation}
is valid for every $r \in [r_1, r_2]$. This provides the motivation to consider the
following characterisation of harmonic functions.

\begin{theorem}
Let $D$ be a bounded domain in $\RR^m$, $m \geq 2$. If $u \in C^0 (D)$ satisfies the
identity
\begin{equation}
M^\bullet (A_x (r_1, r_2), u) = M^\circ (S_{r} (x), u) \label{A=S}
\end{equation}
for every $x \in D$ and every $A_x (r_1, r_2) = \{ y \in \RR^m \,: r_1 < |x - y| < r_2
\}$ with $r_2 > r_1 > 0$ such that $B_{r_2} (x)$ is admissible, whereas $r$ either
attains each value in $(r_1, r_2)$ or is equal to $r_i$, $i = 1$ or $2$, then $u$ is
harmonic in $D$.
\end{theorem}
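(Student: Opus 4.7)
The strategy is to reduce the quadrature identity to a one-dimensional relation for the spherical mean function $f_x(\rho) := M^\circ(S_\rho(x), u)$, to deduce from this relation that $f_x$ is constant on the full admissible range of $\rho$, to identify the constant with $u(x)$ by continuity at $\rho = 0$, and finally to invoke the classical converse of Theorem~1.1 (Koebe's theorem: a continuous function with the spherical mean value property over every admissible sphere is harmonic).

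The first step is to write the volume integral over $A_x(r_1, r_2)$ in polar coordinates centred at $x$; Fubini (justified by continuity of $u$ on the admissible closed annulus) gives
\begin{equation*}
M^\bullet(A_x(r_1, r_2), u) = \frac{m}{r_2^m - r_1^m} \int_{r_1}^{r_2} f_x(\rho)\, \rho^{m-1}\, \D\rho .
\end{equation*}
In the first version of the hypothesis (where $r$ ranges over the whole open interval $(r_1, r_2)$) the identity $M^\bullet = M^\circ$ forces $f_x(r)$ to equal a quantity independent of $r$, so $f_x$ is constant on each admissible interval. In the endpoint cases ($r = r_1$ or $r = r_2$) I would fix one of the radii, differentiate the resulting equality with respect to the other radius using the fundamental theorem of calculus, and read off $f_x(r_2) = f_x(r_1)$. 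In either situation the overlapping admissible intervals allow one to let $r_1 \to 0^+$, and continuity of $u$ at $x$ gives $f_x(\rho) \to u(x)$, hence $f_x \equiv u(x)$ on $(0, \mathrm{dist}(x, \partial D))$.

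Since $x \in D$ was arbitrary, $u$ then satisfies the spherical mean value property for every admissible sphere, and Koebe's converse of Gauss's theorem yields harmonicity in $D$. The argument is technically light: the polar-coordinate rewrite is routine and the differentiation step in the endpoint case is legal because $f_x$ is continuous in $\rho$ on account of $u \in C^0(D)$. The only substantive input is the appeal to Koebe's theorem, so I do not anticipate a genuine obstacle; the only point that deserves a line of care is the passage from ``$f_x$ constant on each admissible $(r_1, r_2)$'' to ``$f_x$ constant on the whole maximal radial range at $x$'', which is handled by chaining overlapping subintervals.
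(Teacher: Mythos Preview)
Your argument is correct, and it takes a somewhat different path from the paper's. The paper does not introduce the radial function $f_x(\rho)=M^\circ(S_\rho(x),u)$ or rewrite the annular mean in polar coordinates. Instead it simply sends $r_1\to 0$ in \eqref{A=S}: in the case $r=r_1$ this yields the ball mean value property $M^\bullet(B_{r_2}(x),u)=u(x)$ and Koebe's theorem finishes; in the case $r=r_2$ it yields \eqref{B=S} and the paper invokes the Beckenbach--Reade theorem (Theorem~2.1); and when $r$ ranges over $(r_1,r_2)$ the paper first lets $r\to r_2$ by continuity to reduce to the previous case. Your route is more self-contained in that it uses only Koebe's spherical converse and never needs Theorem~2.1: the observation that the left side of \eqref{A=S} is independent of $r$ handles the interior case in one line, and in the endpoint cases a single differentiation (with respect to the radius \emph{not} appearing on the right-hand side, so that no regularity of $f_x$ beyond continuity is needed) gives $f_x(r_1)=f_x(r_2)$. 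The paper's approach trades that differentiation step for an appeal to Beckenbach--Reade; yours trades the auxiliary lemma for an elementary calculus argument. Both are short, and the remark in your last sentence about chaining overlapping intervals is indeed the only point that deserves a word, though here it is immediate since one may take $r_1$ arbitrarily small for each fixed admissible $r_2$.
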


This is an analogue of Koebe's theorem \cite{K} (see also the survey \cite{NV}, p.
362), which is the converse of Theorem~1.1. The difference between these theorems
is that annuli and spheres surrounding every $x \in D$ are used in \eqref{A=S},
whereas just spheres centred at $x$ are involved in Koebe's theorem.

In conclusion of this section, we mention that there are numerous papers aimed at
relaxing the assumptions imposed in \cite{K}; see, for example, \cite{HN} and
references cited therein. Of course, Theorem 1.3 also admits improvements; the
obvious one ensuing from the proof below is as follows: it is sufficient to require
that $r$ attains only a sequence of values tending to $r_2$ within $(r_1, r_2)$
instead of attaining each value in this interval. Further improvements are left to
the reader.

{\centering \section{Proof of Theorem 1.3} }

\noindent Prior to proving our theorem, we recall a result on which our proof
relies; it dates back to 1943.

\begin{theorem}[Beckenbach and Reade, \cite{BR}]
Let $D \subset \RR^m$, $m \geq 2$, be a bounded domain. If $u \in C^0 (D)$ satisfies
the identity
\begin{equation}
M^\bullet (B_r (x), u) = M^\circ (S_{r} (x), u) \label{B=S}
\end{equation}
for every $x \in D$ and all $r$ such that $B_r (x)$ is admissible, then $u$ is
harmonic in $D$.
\end{theorem}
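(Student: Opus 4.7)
The plan is to reduce the hypothesis to the standard spherical mean value property and then invoke the classical converse of Gauss's theorem (Koebe's theorem, cited in the paper after Theorem~1.3). The key observation is that volume means are weighted averages of spherical means, so identity \eqref{B=S} can be rewritten as an integral equation in the radius that one can differentiate.

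First, I fix $x \in D$ and pass to polar coordinates. Writing $\phi(\rho) = M^\circ (S_\rho (x), u)$, continuity of $u$ makes $\phi$ continuous on the interval of admissible radii, and the standard layer-cake computation gives
\begin{equation*}
M^\bullet (B_r (x), u) \;=\; \frac{1}{|B_r|} \int_0^r \int_{S_\rho (x)} u \, \D S \, \D \rho \;=\; \frac{m}{r^m} \int_0^r \rho^{m-1} \phi (\rho) \, \D \rho \, .
\end{equation*}
Substituting this into hypothesis \eqref{B=S} and clearing denominators turns it into the Volterra-type integral identity
\begin{equation*}
r^m \phi (r) \;=\; m \int_0^r \rho^{m-1} \phi (\rho) \, \D \rho
\end{equation*}
valid for every admissible radius $r$.

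The right-hand side is continuously differentiable in $r$ (the integrand is continuous), so the left-hand side is too; since $r^m$ is smooth and nonzero for $r > 0$, $\phi$ itself is of class $C^1$ on the interval of admissible radii. Differentiating both sides gives
\begin{equation*}
m r^{m-1} \phi (r) + r^m \phi' (r) \;=\; m r^{m-1} \phi (r) ,
\end{equation*}
whence $\phi' (r) = 0$. Thus $\phi$ is constant in $r$, and its value at $r \to 0^+$ is $u(x)$ by continuity of $u$. Consequently $M^\circ (S_r (x), u) = u(x)$ for every admissible sphere $S_r (x)$, which is the hypothesis of Koebe's theorem; applying it yields that $u$ is harmonic in $D$.

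I do not anticipate a genuine obstacle: the only point that requires care is justifying the differentiation in $r$ from the continuity of $u$ alone, and this is handled by the fundamental theorem of calculus applied to the integral representation, which automatically upgrades regularity of $\phi$. If one wishes to bypass Koebe's theorem, one could alternatively conclude directly from the spherical mean property via the standard argument that a continuous function with this property coincides with the Poisson integral of its boundary values on each admissible ball, hence is harmonic there.
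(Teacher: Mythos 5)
Your proof is correct and follows essentially the same route as the paper: both rewrite the hypothesis via the identity $M^\bullet (B_r (x), u) = \frac{m}{r^m} \int_0^r \rho^{m-1} M^\circ (S_\rho (x), u) \, \D \rho$, differentiate in $r$ to conclude that the means are independent of $r$, let $r \to 0$ to identify the constant with $u(x)$, and finish with Koebe's theorem. The only cosmetic difference is that the paper differentiates the ball mean directly (which needs no extra regularity), whereas you differentiate the Volterra identity after first bootstrapping $C^1$ smoothness of the spherical mean --- a step you justify correctly.
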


\begin{proof}
Let $\rho$ be a small positive number. If $r \in (0, \rho)$, then $M^\bullet (B_r
(x), u)$ is defined for $x$ belonging to an open subset of $D$ that depends on the
smallness of~$\rho$. Moreover, $M^\bullet (B_r (x), u)$ is differentiable with
respect to $r$. Then we have
\[ \partial_r M^\bullet (B_r (x), u) = m r^{-1} [ M^\circ (S_r (x), u) - M^\bullet
(B_r (x), u) ] = 0 \quad \mbox{for} \ r \in (0, \rho) ,
\]
where the last equality is a consequence of \eqref{B=S}. Hence $M^\bullet (B_r (x),
u)$ does not depend on $r \in (0, \rho)$. On the other hand, shrinking $B_r (x)$ to
its centre by letting $r \to 0$ and taking into account that $u \in C^0 (D)$, one
obtains that 
\[ M^\bullet (B_r (x), u) \to u (x) \quad \mbox{as} \ r \to 0 ,
\]
when $x$ belongs to an arbitrary closed subset of $D$. Hence for every $x \in D$ we
have
\[ u (x) = M^\bullet (B_r (x), u) = M^\circ (S_{r} (x), u)
\]
for all $r \in (0, \rho)$ with some $\rho$, whose smallness depends on $x$. Then
Koebe's theorem yields that $u$ is harmonic in $D$.
\end{proof}

This proof reproduces the original one, but in the $m$-dimensional setting, whereas
Beckenbach and Reade restricted themselves to $m = 2$. Subsequently, a
generalization of Theorem~2.1 to subharmonic functions was obtained in \cite{FM};
namely, {\it the inequality
\begin{equation}
M^\bullet (B_{r} (x), u) \leq M^\circ (S_{r} (x), u) \label{B<S}
\end{equation}
holds for every $x \in D$ and all $r$ such that $B_r (x)$ is admissible, if and only
if $u$ is subharmonic in $D$.} It is clear that this assertion implies Theorem~2.1.
Moreover, \eqref{B<S} implies that {\it for every $x \in D$ and all $r \in [r_1,
r_2]$ the inequality
\begin{equation*}
M^\bullet (A_x (r_1, r_2), u) \leq M^\circ (S_{r} (x), u)
\end{equation*}
holds provided $B_{r_2} (x)$ is admissible and $u$ is subharmonic.} It is
interesting whether the converse of this assertion is true.

\begin{proof}[Proof of Theorem 1.3]
Let us begin with the case when $r = r_1$, that is, identity \eqref{A=S} takes the
form
\begin{equation*}
M^\bullet (A_x (r_1, r_2), u) = M^\circ (S_{r_1} (x), u) \, ,
\end{equation*}
and holds for every $x \in D$ and every $A_x (r_1, r_2)$. Letting $r_1 \to 0$ (this
is possible because the identity is valid for all annuli and in view of continuity
of $u$), we obtain
\[ M^\bullet (B_{r_2} (x), u) = u (x) \, ,
\]
which holds for every $x \in D$ and every admissible $B_{r_2} (x)$. Hence $u$ is
harmonic in~$D$ by Koebe's theorem.

Assuming that $r = r_2$ in identity \eqref{A=S}, we see that it turns
into~\eqref{B=S} with $r = r_2$ in the limit $r_1 \to 0$, and the latter holds for
every $x \in D$ and every admissible $B_{r_2} (x)$. Hence $u$ is harmonic in $D$ by
Theorem~2.1.

If $r$ attains each value in $(r_1, r_2)$, then letting $r \to r_2$ in the original
identity \eqref{A=S}, we arrive at the same identity with $r = r_2$. Then the
considerations of the previous paragraph are applicable, thus yielding that $u$ is
harmonic in $D$ by Theorem~2.1.
\end{proof}

{\centering \section{A conjecture involving a weighted mean} }

In the recent note \cite{Ku2}, two remarkable properties of the weight function
$\log \frac{r}{|x - y|}$ were established. The first of them is the following
analytic characterisation of balls in $\RR^m$.

\begin{theorem}
Let $D \subset \RR^m$ be a bounded domain. If the identity
\begin{equation}
\frac{1}{m} = \frac{1}{|D|} \int_D \log \frac{r}{|x - y|} \, \D y \label{MW''}
\end{equation}
holds with $x \in D$ and $r > 0$ such that $|D| \geq |B_r|$, then $D = B_r (x)$.
\end{theorem}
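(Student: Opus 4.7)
The plan is to compare the integral over $D$ with the corresponding integral over the candidate ball $B_r(x)$, exploiting that the weight $\log(r/|x-y|)$ is strictly positive on $B_r(x)$, strictly negative off $\overline{B_r(x)}$, and vanishes only on the null sphere $S_r(x)$. This sign structure converts an integral comparison into a set comparison.

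First I would verify by a direct computation in spherical coordinates that \eqref{MW''} is automatically satisfied by $D = B_r(x)$. With polar coordinates centred at $x$,
\[
\int_{B_r(x)} \log \frac{r}{|x-y|} \, \D y \; = \; \omega_m \int_0^r \rho^{m-1} \log \frac{r}{\rho} \, \D\rho \; = \; \frac{\omega_m r^m}{m^2} \; = \; \frac{|B_r|}{m},
\]
so dividing by $|B_r|$ gives $1/m$.

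Next, rewriting the hypothesis as $\int_D \log(r/|x-y|) \, \D y = |D|/m$ and subtracting the identity just established, the contribution from $D \cap B_r(x)$ cancels and one obtains
\[
\int_{D \setminus B_r(x)} \log \frac{r}{|x-y|} \, \D y \; - \; \int_{B_r(x) \setminus D} \log \frac{r}{|x-y|} \, \D y \; = \; \frac{|D| - |B_r|}{m}.
\]
The right-hand side is nonnegative by assumption, whereas the integrand in the first integral on the left is $\leq 0$ (since $|x-y| \geq r$ there) and the integrand in the second is $\geq 0$, so the left-hand side is $\leq 0$. Both sides must therefore vanish: $|D| = |B_r|$ and, since the two summands on the left have opposite non-strict signs and differ by zero, each is itself zero. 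Because both integrands are strictly signed off the null sphere $S_r(x)$, this forces $|D \setminus B_r(x)| = |B_r(x) \setminus D| = 0$.

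Finally, I would upgrade these measure-theoretic identities to the set equality $D = B_r(x)$. Since $D$ is open, the set $D \setminus \overline{B_r(x)}$ is open and has measure zero, hence empty, so $D \subset \overline{B_r(x)}$; combined with openness of $D$ this gives $D \subset B_r(x)$. The reverse inclusion then uses that $D$ is a bounded domain (in particular, open and connected) containing $x$, together with $|B_r(x) \setminus D| = 0$. I expect the main obstacle to sit precisely at this last step: a priori $B_r(x) \setminus D$ could be a nonempty null set relatively closed in $B_r(x)$, so measure considerations alone do not suffice and one must invoke the topological structure of $D$ (openness, connectedness, and the fact that $x \in D$) to rule out such pathologies. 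Everything preceding is a clean sign/comparison argument governed by the sign of $\log(r/|x-y|)$.
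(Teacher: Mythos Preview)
The paper does not prove this theorem here; it is quoted from \cite{Ku2}, so there is no in-paper proof to compare against. On its own merits, your comparison argument is clean and correct through the conclusions $|D|=|B_r|$, $|D \setminus B_r(x)| = |B_r(x) \setminus D| = 0$, and $D \subset B_r(x)$.

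Your instinct about the final step is right, and the obstacle is genuine rather than merely technical: openness, connectedness, and $x \in D$ do \emph{not} suffice to upgrade $|B_r(x) \setminus D| = 0$ to $B_r(x) \subset D$. A concrete obstruction in $\RR^2$: take $D = B_1(0) \setminus \{(t,0): \tfrac12 \leq t < 1\}$. This set is open (the removed segment is relatively closed in $B_1(0)$, since its only missing limit point $(1,0)$ lies on $S_1(0)$), bounded, connected, and contains the origin; since the slit is Lebesgue-null one has $|D| = |B_1|$ and
\[
\int_D \log \frac{1}{|y|}\,\D y \;=\; \int_{B_1(0)} \log \frac{1}{|y|}\,\D y \;=\; \frac{|B_1|}{2},
\]
so \eqref{MW''} holds with $x=0$, $r=1$, yet $D \neq B_1(0)$. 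Analogous slit examples work for every $m \geq 2$. Thus, for the statement as reproduced here, the sharpest conclusion obtainable is $D = B_r(x)$ up to a null set; a genuine set equality needs an additional regularity hypothesis on $D$ (for instance that $D$ coincides with the interior of its closure), which is presumably present or implicit in \cite{Ku2}. Everything in your argument up to $D \subset B_r(x)$ and $|B_r(x)\setminus D|=0$ is correct and complete; the residual step is not a defect of your reasoning but of the statement as quoted.
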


The second one is the weighted mean value property of harmonic functions.

\begin{theorem}
Let $u \in C^2 (D)$ be harmonic in a domain $D \subset \RR^m$, $m \geq 2$. Then
\begin{equation}
u (x) = \frac{m}{|B_r|} \int_{B_r (x)} u (y) \log \frac{r}{|x-y|} \, \D y 
\label{har}
\end{equation}
for every $x \in D$ and each admissible ball $B_r (x)$.
\end{theorem}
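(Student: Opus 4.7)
The plan is to reduce the weighted volume integral to a one-dimensional integral by slicing $B_r(x)$ into concentric spheres and invoking the unweighted spherical mean value property of Theorem~1.1 on each slice. Since the weight $\log(r/|x-y|)$ depends only on the radial coordinate $\rho = |x-y|$, it factors out of the spherical part of the integral.

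First I would switch to polar coordinates $y = x + \rho\,\omega$, with $\omega \in S_1(0)$ and $\rho \in (0,r)$, so that $\D y = \rho^{m-1}\D\rho\,\D S_\omega$. This gives
\[
\int_{B_r(x)} u(y) \log \frac{r}{|x-y|} \, \D y
= \int_0^r \rho^{m-1} \log\frac{r}{\rho}
\left( \int_{S_1(0)} u(x + \rho\,\omega)\,\D S_\omega \right) \D\rho .
\]
Because $B_r(x)$ is admissible, every concentric sphere $S_\rho(x)$ with $\rho \in (0,r)$ is admissible as well. Theorem~1.1, rewritten via \eqref{sm}, yields $\int_{S_1(0)} u(x+\rho\omega)\,\D S_\omega = \omega_m\,u(x)$ for each such $\rho$, so the inner integral is the constant $\omega_m u(x)$ and pulls outside.

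It then remains to evaluate the purely radial integral
\[
I_m(r) := \int_0^r \rho^{m-1} \log\frac{r}{\rho}\, \D\rho .
\]
The substitution $\rho = r t$ gives $I_m(r) = r^m \int_0^1 t^{m-1} (-\log t)\,\D t$, and the standard identity $\int_0^1 t^{m-1}(-\log t)\,\D t = 1/m^2$ (one integration by parts) yields $I_m(r) = r^m/m^2$. Combining with $|B_r| = \omega_m r^m / m$, one obtains
\[
\frac{m}{|B_r|} \int_{B_r(x)} u(y) \log \frac{r}{|x-y|} \, \D y
= \frac{m^2}{\omega_m r^m}\cdot \omega_m u(x) \cdot \frac{r^m}{m^2} = u(x),
\]
which is \eqref{har}.

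There is no real obstacle: the argument is just Fubini plus the classical spherical mean value property plus a one-line elementary integral. The only point to keep in mind is the mild singularity of $\log(r/\rho)$ at $\rho = r$, but this is logarithmic and harmless for the convergence of $I_m(r)$, so no limiting argument or additional regularity beyond $u \in C^2(D)$ is needed.
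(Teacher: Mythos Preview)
Your proof is correct. The paper does not actually give its own proof of Theorem~3.2 here; the result is quoted from \cite{Ku2}, so there is no in-paper argument to compare against directly. Your route---slice $B_r(x)$ into concentric spheres, apply Theorem~1.1 on each slice to replace the angular integral by $\omega_m u(x)$, then evaluate $\int_0^r \rho^{m-1}\log(r/\rho)\,\D\rho = r^m/m^2$ and combine with $|B_r|=\omega_m r^m/m$---is the natural direct argument. The related proof the paper does supply, for the panharmonic analogue Theorem~5.1, goes instead through the Green-type representation \eqref{Ev}; that device does not by itself yield \eqref{har} for harmonic $u$ (the Laplacian term simply vanishes), so your slicing approach is genuinely the right tool here.

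One small slip in your closing paragraph: the logarithmic singularity of $\log(r/\rho)$ sits at $\rho=0$, not at $\rho=r$ (where the weight vanishes). Integrability is still immediate since $\rho^{m-1}\log(r/\rho)\to 0$ as $\rho\to 0^+$, so nothing in the argument is affected.
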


Remarkably, the weighted mean \eqref{har} does not depend on $r$ (this is not true
for the mean values of harmonic functions involving other weights considered in
\cite{Ku2}). Therefore, combining the last assertion and considerations preceding
Theorem 1.3, one arrives at the following.

\begin{conjecture}
Let $D$ be a bounded domain in $\RR^m$, $m \geq 2$. If $u \in C^0
(D)$ satisfies the identity
\begin{equation}
\frac{m}{|A (r_1, r_2)|} \int_{A_x (r_1, r_2)} u (y) \log \frac{r}{|x-y|} \, \D y =
M^\circ (S_{r} (x), u) \label{con}
\end{equation}
for every $x \in D$ and every $A_x (r_1, r_2)$ with $r_2 > r_1 > 0$ such that
$B_{r_2} (x)$ is admissible, whereas $r$ either attains each value in $(r_1, r_2)$
or is equal to $r_i$, $i = 1$ or $2$, then $u$ is harmonic in $D$.
\end{conjecture}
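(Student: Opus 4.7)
The plan is to mirror the proof of Theorem~1.3, substituting Theorem~3.2 for the classical volume mean value property. As in that proof, if $r$ attains every value in $(r_1, r_2)$, passing $r \to r_2^-$ in \eqref{con} (whose two sides depend continuously on $r$) reduces to the case $r = r_2$. For that case, let $r_1 \to 0$: since $u$ is continuous on $\overline{B_{r_2}(x)}$ and $\log \frac{r_2}{|x - y|}$ has only an integrable logarithmic singularity at $y = x$ in any dimension $m \geq 2$, dominated convergence delivers
\[
\frac{m}{|B_{r_2}|} \int_{B_{r_2}(x)} u(y) \log \frac{r_2}{|x - y|} \, \D y = M^\circ(S_{r_2}(x), u)
\]
for every admissible $B_{r_2}(x)$. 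The case $r = r_1$ is more delicate because the weight degenerates as $r_1 \to 0$; there I would instead differentiate \eqref{con} in $r_2$ at fixed $r_1$, or subtract the $B_{r_1}(x)$-contribution using \eqref{har}, to reduce to the displayed identity.

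The core step is a weighted analogue of Koebe's theorem: the displayed identity above, valid for all admissible $(x, r_2)$, forces $u$ to be harmonic. For fixed $x \in D$ set
\[
F(r) = \frac{m}{|B_r|} \int_{B_r(x)} u(y) \log \frac{r}{|x - y|} \, \D y, \quad P(r) = M^\bullet(B_r(x), u), \quad Q(r) = M^\circ(S_r(x), u).
\]
For $u \in C^0$, differentiating under the integral sign (the boundary contribution from the sphere $S_r(x)$ vanishes because $\log(r/r) = 0$, while $\partial_r \log(r/|x - y|) = 1/r$) yields $F'(r) = (m/r)[P(r) - F(r)]$. Combined with the hypothesis $F \equiv Q$, this gives $Q'(r) = (m/r)[P(r) - Q(r)]$; in particular $Q$ is automatically differentiable. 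The classical identity $P'(r) = (m/r)[Q(r) - P(r)]$, which underlies the proof of Theorem~2.1, then yields $(P + Q)'(r) \equiv 0$ and $(P - Q)'(r) = -(2m/r)[P(r) - Q(r)]$. The first equation, together with $P(r), Q(r) \to u(x)$ as $r \to 0^+$, forces $P + Q \equiv 2u(x)$; the second is a linear ODE whose general solution is $C r^{-2m}$, and boundedness of $P - Q$ as $r \to 0^+$ forces $C = 0$. Hence $P \equiv Q$, and Theorem~2.1 concludes that $u$ is harmonic in $D$.

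The principal obstacle is the separate treatment of the case $r = r_1$: the direct limit $r_1 \to 0$ makes $\log(r_1/|x - y|) \to -\infty$ uniformly on the annulus, so the reduction used for $r = r_2$ breaks down. A viable route is to combine \eqref{con} at $r = r_1$ with \eqref{har} applied on $B_{r_1}(x)$, cancelling the singular contribution to arrive at a sphere-mean identity amenable as $r_1 \to 0$; alternatively, one fixes $r_1$, differentiates \eqref{con} in $r_2$, and invokes the case $r = r_2$ already settled. A secondary point is that the above reduction genuinely needs arbitrarily small $r_1$ to be admitted, so any variant of the conjecture restricting $r_1$ to be bounded away from $0$ would call for a different strategy.
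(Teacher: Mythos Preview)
The paper does not prove this statement: it is explicitly left as a conjecture, and the surrounding text only proposes a route, namely to first establish the converse of Theorem~3.2 (that $u(x)=\frac{m}{|B_r|}\int_{B_r(x)}u(y)\log\frac{r}{|x-y|}\,\D y$ for all admissible $r$ forces harmonicity), after which ``the conjecture will immediately follow in the same way as Theorem~1.3.'' So there is no proof in the paper to compare against, only a suggested strategy.

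Your core step is correct and is \emph{not} the paper's suggested route. After the limit $r_1\to 0$ in the case $r=r_2$ you do not land on the converse of Theorem~3.2 but on a weighted Beckenbach--Reade identity $F\equiv Q$, and your ODE argument --- combining $F'(r)=(m/r)[P(r)-F(r)]$ with the classical $P'(r)=(m/r)[Q(r)-P(r)]$ to get $(P+Q)'=0$ and $(P-Q)'=-(2m/r)(P-Q)$, whence $P\equiv Q$ --- is clean and correct for $u\in C^0(D)$. This settles the cases $r=r_2$ and ``$r$ attains each value in $(r_1,r_2)$'' of the conjecture, and in fact proves a result the paper does not have.

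The genuine gap is the case $r=r_1$. Your first proposal, to ``subtract the $B_{r_1}(x)$-contribution using \eqref{har},'' is circular: \eqref{har} is Theorem~3.2 and holds only for functions already known to be harmonic. Your second proposal, differentiating \eqref{con} in $r_2$ at fixed $r=r_1$, is viable but you do not carry it out. If you do, the right-hand side $M^\circ(S_{r_1}(x),u)$ is independent of $r_2$, and differentiating the left-hand side gives $m\,Q(r_2)\log(r_1/r_2)=Q(r_1)$ for all admissible $r_2>r_1$; letting $r_2\to r_1^{+}$ forces $Q(r_1)=0$ for every admissible $r_1$, hence $u\equiv 0$. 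That disposes of this case (trivially $u$ is harmonic), but it also reveals that the $r=r_1$ hypothesis is degenerate --- only the zero function satisfies it --- so this branch of the conjecture is not a genuine characterization. The same degeneracy, incidentally, afflicts the paper's own suggested reduction in this case: letting $r_1\to 0$ with $r=r_1$ sends the weight $\log(r_1/|x-y|)$ to $-\infty$ on the annulus, so the analogue of the Theorem~1.3 argument does not go through either.
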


Identity \eqref{con} is similar to \eqref{A=S}, but the presence of a logarithmic
weight in the annular mean makes this assertion substantially more difficult than
Theorem~1.3. Indeed, the converse of Theorem~3.2 is not proved yet, and so it is
worth considering this converse first. Then the conjecture will immediately follow
in the same way as Theorem~1.3.

{\centering \section{Characterisation of pan- and metharmonic functions} }

\noindent We begin this section with considering real-valued $C^2$-solutions of the
$m$-dimensional modified Helmholtz equation:
\begin{equation}
\nabla^2 u - \mu^2 u = 0 , \quad \mu \in \RR \setminus \{0\} ;
\label{Hh}
\end{equation}
$\nabla = (\partial_1, \dots , \partial_m)$ is the gradient operator, $\partial_i =
\partial / \partial x_i$. Unfortunately, it is not commonly known that these
solutions are called panharmonic functions (or $\mu$-panharmonic functions) by
analogy with solutions of the Laplace equation; this convenient abbreviation coined
by Duffin \cite{D} will be used in what follows.

A new proof of the following analogue of Theorem 1.1 for panharmonic functions was
obtained in~\cite{Ku3}.

\begin{theorem}
Let $u$ be panharmonic in a domain $D \subset {\RR}^m$, $m \geq 2$. Then for every
$x \in D$ the identity
\begin{equation}
M^\circ (S_r (x), u) =  a^\circ (\mu r) \, u (x) , \quad a^\circ (\mu r) = \Gamma
\left( \frac{m}{2} \right) \frac{I_{(m-2)/2} (\mu r)}{(\mu r / 2)^{(m-2)/2}} \, ,
\label{MM}
\end{equation}
holds for each admissible sphere $S_r (x);$ $I_\nu$ denotes the modified Bessel
function of order $\nu$.
\end{theorem}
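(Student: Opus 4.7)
My plan is to fix $x \in D$ and study the function $F(r) = M^\circ(S_r(x), u)$ as a function of the radius $r$. The idea is to derive an ordinary differential equation for $F$, identify it as a modified Bessel equation after a suitable substitution, and then determine the unique solution matching the initial data $F(0) = u(x)$.

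First I would parametrize the sphere and write
\[
F(r) = \frac{1}{\omega_m} \int_{S_1(0)} u(x + r\xi) \, \D S_\xi.
\]
Differentiating under the integral sign yields $F'(r) = \omega_m^{-1}\!\int_{S_1(0)} \xi \cdot \nabla u(x+r\xi) \, \D S_\xi$, and applying the divergence theorem in the variable $\xi$ converts this to a volume integral of $\nabla^2 u(x+r\xi)$ over $B_1(0)$. Invoking the modified Helmholtz equation \eqref{Hh} replaces $\nabla^2 u$ by $\mu^2 u$, after which an elementary change of variables recasts the right-hand side as $\mu^2 r^{1-m}\!\int_0^r s^{m-1} F(s)\, \D s$. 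Multiplying by $r^{m-1}$ and differentiating once more gives the Darboux-type equation
\[
F''(r) + \frac{m-1}{r}\, F'(r) - \mu^2 F(r) = 0, \qquad r > 0,
\]
together with the natural initial data $F(0) = u(x)$ and $F'(0) = 0$, both of which follow from $u \in C^2(D)$ and the symmetry of the integrand.

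Next I would eliminate the lower-order term with the substitution $F(r) = r^{-\nu} w(\mu r)$, where $\nu = (m-2)/2$, and set $s = \mu r$. A direct (if slightly tedious) calculation shows that the coefficients of $r^{-\nu-2}$ and $r^{-\nu-1}$ combine exactly so as to leave the modified Bessel equation
\[
s^2 w''(s) + s w'(s) - (s^2 + \nu^2) w(s) = 0.
\]
Its two linearly independent solutions are $I_\nu(s)$ and $K_\nu(s)$; since $K_\nu$ diverges as $s \to 0^+$ while $F$ must remain bounded (indeed continuous) at $r = 0$, only the $I_\nu$ branch survives, so $F(r) = C\, r^{-\nu} I_\nu(\mu r)$ for some constant $C$ depending on $x$.

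Finally I would pin down $C$ from the series representation $I_\nu(z) = \sum_{k \geq 0}(z/2)^{2k+\nu}/[k!\,\Gamma(k+\nu+1)]$, which gives $r^{-\nu} I_\nu(\mu r) \to (\mu/2)^\nu/\Gamma(\nu+1)$ as $r \to 0$. Equating the limit with $u(x)$ yields $C = u(x)\,2^\nu \Gamma(\nu+1) \mu^{-\nu}$, and since $\nu+1 = m/2$, substitution produces exactly
\[
F(r) = u(x)\, \Gamma(m/2)\, \frac{I_{(m-2)/2}(\mu r)}{(\mu r/2)^{(m-2)/2}} = a^\circ(\mu r)\, u(x),
\]
which is \eqref{MM}. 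The bookkeeping in the substitution leading to the Bessel equation — in particular checking that the coefficient of $r^{-\nu-2}$ collapses to $-\nu^2$ — is the only place where a slip is likely, but this is algebra rather than a genuine obstacle; the structural content of the proof is the reduction to the ODE via divergence theorem and the selection of the regular solution.
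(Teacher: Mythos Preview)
Your argument is correct and is the classical route: derive the Euler--Poisson--Darboux type ODE for the spherical mean via the divergence theorem, reduce it to the modified Bessel equation by the substitution $F(r)=r^{-\nu}w(\mu r)$ with $\nu=(m-2)/2$, discard $K_\nu$ by regularity at the origin, and fix the constant from the leading term of the series for $I_\nu$. The algebra you flag as the only risky step indeed collapses to $-\nu^2$, so nothing is missing.

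As for comparison: the paper does not actually prove Theorem~4.1. It only states the result and refers to \cite{Ku3} for ``a new proof'', without reproducing any argument here. There is thus no in-paper proof to set yours against. Your derivation is the standard one (spherical means of solutions to $\nabla^2 u=\mu^2 u$ satisfy the radial part of that operator, which is Bessel after rescaling); the phrase ``new proof'' in the citation suggests \cite{Ku3} may do something other than this ODE reduction, but that cannot be judged from the present paper.
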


The function $a^\circ$ increases monotonically on $[0, \infty)$ from $a^\circ (0) =
1$ to infinity. This is guaranteed by properties of $I_\nu$; see \cite{Wa}, pp. 79,
80. For $m=3$ the mean value formula \eqref{MM} has particularly simple form.
Indeed,
\[ a^\circ (\lambda r) = \sinh (\mu r) / (\mu r) \, ,
\]
because $I_{1/2} (z) = \sqrt{2 / (\pi z)} \sinh z$. Integrating \eqref{MM} with
respect to $r$, one arrives at the following.

\begin{corollary}[\cite{Ku3}]
Let $D$ be a domain in ${\RR}^m$, $m \geq 2$. If $u$ is panharmonic in $D$, then
\begin{equation}
M^\bullet (B_r (x), u) =  a^\bullet (\mu r) \, u (x) , \quad a^\bullet (\mu r) =
\Gamma \left( \frac{m}{2} + 1 \right) \frac{I_{m/2} (\mu r)}{(\mu r / 2)^{m/2}} \, ,
\label{MM'}
\end{equation}
and
\begin{equation} 
a^\circ (\mu r) M^\bullet (u, x, r) = a^\bullet (\mu r) M^\circ (u, x, r) 
\label{MMtil}
\end{equation}
for every admissible ball $B_r (x)$.
\end{corollary}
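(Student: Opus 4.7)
The plan is to derive \eqref{MM'} by integrating the spherical identity \eqref{MM} radially, since the volume mean over $B_r(x)$ is a weighted average of spherical means. Concretely, writing the volume integral in polar coordinates centred at $x$ gives
\[
M^\bullet(B_r(x),u) \;=\; \frac{1}{|B_r|}\int_0^r \omega_m \rho^{m-1}\, M^\circ(S_\rho(x),u)\,\D\rho \;=\; \frac{m}{r^m}\int_0^r \rho^{m-1} M^\circ(S_\rho(x),u)\,\D\rho,
\]
using $|B_r| = \omega_m r^m/m$. Substituting \eqref{MM} pulls $u(x)$ outside, so the task reduces to evaluating $\int_0^r \rho^{m-1} a^\circ(\mu\rho)\,\D\rho$ in closed form.

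Next I would change variable $t = \mu\rho$ and expand $a^\circ$ using its definition. The integrand becomes (up to constants) $t^{m/2} I_{(m-2)/2}(t)$, and the key analytic fact to bring in is the standard modified-Bessel identity
\[
\frac{\D}{\D t}\bigl[t^\nu I_\nu(t)\bigr] \;=\; t^\nu I_{\nu-1}(t),
\]
applied with $\nu = m/2$ (see, e.g., \cite{Wa}). This gives $\int_0^{\mu r} t^{m/2} I_{(m-2)/2}(t)\,\D t = (\mu r)^{m/2} I_{m/2}(\mu r)$, the lower limit vanishing because $I_\nu(t)\sim t^\nu/[2^\nu\Gamma(\nu+1)]$ as $t\to 0$. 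Collecting the prefactors and using $\Gamma(m/2+1) = (m/2)\Gamma(m/2)$ along with $(\mu r/2)^{-m/2} = 2^{m/2}(\mu r)^{-m/2}$, the expression reassembles to exactly $a^\bullet(\mu r)\,u(x)$, proving \eqref{MM'}.

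Finally, \eqref{MMtil} is immediate from \eqref{MM} and \eqref{MM'}: both $a^\circ(\mu r) M^\bullet(B_r(x),u)$ and $a^\bullet(\mu r) M^\circ(S_r(x),u)$ equal $a^\circ(\mu r)\,a^\bullet(\mu r)\,u(x)$ at a point $x$ where $u$ is panharmonic and $B_r(x)$ is admissible.

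The only place that needs care is the Bessel bookkeeping: verifying that the indices align so that the antiderivative identity is applicable with $\nu = m/2$, and that the combinatorial constants ($\Gamma$-values and powers of $2$) collapse to match the stated form of $a^\bullet$. There is no genuine obstacle beyond this bookkeeping, since the whole statement is a corollary of Theorem~4.1 together with a single Bessel integration formula.
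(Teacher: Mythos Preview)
Your proposal is correct and follows exactly the approach the paper indicates: the paper states just before the corollary that one arrives at it by ``integrating \eqref{MM} with respect to $r$'', and you carry out precisely this radial integration, supplying the Bessel antiderivative identity $\frac{\D}{\D t}[t^{\nu}I_\nu(t)]=t^{\nu}I_{\nu-1}(t)$ with $\nu=m/2$ that makes the constants collapse to $a^\bullet$. Your derivation of \eqref{MMtil} from \eqref{MM} and \eqref{MM'} is likewise the intended one.
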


Here, identity \eqref{MM'} is analogous to that in Theorem 1.2, whereas
\eqref{MMtil} generalizes \eqref{B=S}. The following converse of this corollary was
established in \cite{Ku4}; see Theorems~3.1 and 3.2 of this paper.

\begin{theorem}
Let $D$ be a bounded domain in $\RR^m$. If $u \in C^0 (D)$ satisfies identity
\eqref{MM'} with $\mu > 0$ for every $x \in D$ and for all $r \in (0, r (x))$, where
$B_{r (x)} (x)$ is admissible, then $u$ is $\mu$-panharmonic in $D$.

Moreover, if identity \eqref{MMtil} is fulfilled for $u$ in the same way as above,
then $u$ is $\mu$-panharmonic in $D$.
\end{theorem}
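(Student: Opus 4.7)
The plan is to reduce both assertions to the spherical mean identity $M^\circ(S_r(x),u) = a^\circ(\mu r)\,u(x)$ for all small $r$, then upgrade $u$ to $C^\infty$ via mollification, and finally extract the equation $\nabla^2 u = \mu^2 u$ from the small-$r$ Taylor expansion. The main tool throughout is the elementary identity
\[
\partial_r M^\bullet (B_r (x), u) = m r^{-1} \bigl[ M^\circ (S_r (x), u) - M^\bullet (B_r (x), u) \bigr],
\]
valid for any $u \in C^0(D)$ and already used in the proof of Theorem~2.1, together with the relation $a^\circ(s) = a^\bullet(s) + (s/m)\,(a^\bullet)'(s)$. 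This relation follows by differentiating the polar-coordinate identity $r^m a^\bullet(\mu r)/m = \int_0^r s^{m-1}\,a^\circ(\mu s)\,\D s$, which is in turn the integrated form of \eqref{MM} underlying Corollary~4.1.

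For the first assertion, substitute \eqref{MM'} into the displayed identity: its left-hand side becomes $\mu\,(a^\bullet)'(\mu r)\,u(x)$, so solving for $M^\circ$ and applying the above relation gives
\[
M^\circ(S_r(x),u) = \bigl[ a^\bullet(\mu r) + (\mu r/m)\,(a^\bullet)'(\mu r) \bigr]\, u(x) = a^\circ(\mu r)\, u(x).
\]
For the second assertion, inserting \eqref{MMtil} into the same differentiation formula yields the separable ODE $\partial_r M^\bullet = \mu\,\bigl[(a^\bullet)'(\mu r)/a^\bullet(\mu r)\bigr]\,M^\bullet$, whose solutions are of the form $M^\bullet(B_r(x),u) = C(x)\,a^\bullet(\mu r)$. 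Sending $r \to 0$ and using continuity of $u$ at $x$ forces $C(x) = u(x)$, recovering \eqref{MM'} and reducing to the first case.

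To upgrade $u$ to $C^\infty$, convolve with a radial mollifier $\phi_\epsilon \in C^\infty_c(B_\epsilon(0))$ of unit mass. Passing to polar coordinates,
\[
(u * \phi_\epsilon)(x) = u(x) \int_0^\epsilon \omega_m\, s^{m-1}\, \phi_\epsilon(s)\, a^\circ(\mu s)\, \D s = C_\epsilon\, u(x)
\]
for every $x$ with $\mathrm{dist}(x, \partial D) > \epsilon$. Since $C_\epsilon \to 1$ as $\epsilon \to 0$, $u$ coincides with the smooth function $C_\epsilon^{-1}\,(u * \phi_\epsilon)$ on the interior subdomain $\{x \in D : \mathrm{dist}(x, \partial D) > \epsilon\}$, so $u \in C^\infty(D)$. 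Inserting the classical Taylor expansions $M^\circ(S_r(x),u) = u(x) + (r^2/(2m))\nabla^2 u(x) + O(r^4)$ and $a^\circ(\mu r) = 1 + (\mu r)^2/(2m) + O(r^4)$ into the spherical mean identity and comparing the $r^2$ coefficients gives $\nabla^2 u = \mu^2 u$.

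The step I expect to be the main obstacle is the regularity upgrade. The hypothesis supplies the identity only for $r$ in a range $(0, r(x))$ pointwise, so for the mollification computation to represent $u(x)$ uniformly on a neighborhood one must control $r(x)$ from below there. The natural remedy is to interpret $r(x)$ as (any value up to) $\mathrm{dist}(x, \partial D)$, which lets the identity be invoked simultaneously on $D_\epsilon = \{x : \mathrm{dist}(x, \partial D) > \epsilon\}$; letting $\epsilon \to 0$ then delivers $u \in C^\infty(D)$ and completes the argument.
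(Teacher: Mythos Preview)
The paper does not actually supply a proof of this theorem: it is quoted from the external reference \cite{Ku4} (Theorems~3.1 and~3.2 there), so there is no in-paper argument to compare against.

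Your proposal is correct and self-contained. Differentiating $M^\bullet$ in $r$ and using the algebraic relation $a^\circ = a^\bullet + (s/m)(a^\bullet)'$ cleanly reduces \eqref{MM'} to the spherical identity \eqref{MM}; the ODE reduction of \eqref{MMtil} to \eqref{MM'} is likewise sound since $a^\bullet$ does not vanish. The mollification step is the standard way to bootstrap $C^0$ to $C^\infty$ from a spherical mean identity, and the Taylor comparison $M^\circ(S_r(x),u)=u(x)+\tfrac{r^2}{2m}\nabla^2 u(x)+o(r^2)$ against $a^\circ(\mu r)=1+\tfrac{(\mu r)^2}{2m}+O(r^4)$ delivers $\nabla^2 u=\mu^2 u$. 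The one genuine subtlety you flag---uniformity of the radius $r(x)$ for the convolution to make sense on an open set---is resolved by the intended reading that \eqref{MM'} (resp.\ \eqref{MMtil}) holds for all admissible balls, i.e.\ $r(x)=\mathrm{dist}(x,\partial D)$; with that, the mollification works on each $D_\epsilon=\{x:\mathrm{dist}(x,\partial D)>\epsilon\}$ and the conclusion follows by exhaustion.
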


Here, the first (second) assertion is similar to Koebe's theorem (Theorem~2.1).

The same considerations as those preceding Theorem~1.3, but using \eqref{MM} and
\eqref{MM'} instead of Theorems~1.1 and 1.2, respectively, lead to the following
generalisation of the quadrature identity \eqref{A=S0}
\begin{equation*}
M^\bullet (A (r_1, r_2), u) = \frac{a^\bullet (\mu r_2) |B_{r_2} (0)| - a^\bullet
(\mu r_1) |B_{r_1} (0)|}{a^\circ (\mu r) |A (r_1, r_2)|} M^\circ (S_{r} (0), u) ,
\end{equation*}
which is valid for every $r \in [r_1, r_2]$ provided $B_{r_2} (0)$ belongs to the
domain, where $u$ is panharmonic. This suggests the following analogue of
Theorem~1.3 for panharmonic functions.

\begin{theorem}
Let $D \subset \RR^m$, $m \geq 2$, be a bounded domain. If $u \in C^0 (D)$
satisfies the identity
\begin{equation}
M^\bullet (A_x (r_1, r_2), u) = \frac{a^\bullet (\mu r_2) |B_{r_2} (x)| - a^\bullet
(\mu r_1) |B_{r_1} (x)|}{a^\circ (\mu r) |A_x (r_1, r_2)|} M^\circ (S_{r} (x), u) \, ,
\ \ where \ \mu > 0 ,
\label{A=Span}
\end{equation}
for every $x \in D$ and every $A_x (r_1, r_2)$ with $r_2 > r_1 > 0$ such that
$B_{r_2} (x)$ is admissible, whereas $r$ either attains each value in $(r_1, r_2)$
or is equal to $r_i$, $i = 1$ or $2$, then $u$ is $\mu$-panharmonic in $D$.
\end{theorem}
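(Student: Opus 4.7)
The plan is to mimic the proof of Theorem~1.3, with Koebe's theorem and Theorem~2.1 replaced by the first and second assertions of Theorem~4.2 respectively. The identity \eqref{A=Span} features the same limiting structure as \eqref{A=S}: as $r_1 \to 0^+$ with $r_2$ fixed, the annular mean $M^\bullet(A_x(r_1, r_2), u)$ tends to the ball mean $M^\bullet(B_{r_2}(x), u)$, the spherical mean $M^\circ(S_{r_1}(x), u)$ tends to $u(x)$ by continuity, and the scalar coefficients on the right of \eqref{A=Span} pass to limits determined by the continuity of $a^\bullet$ and $a^\circ$ at $0$ (where $a^\bullet(0) = a^\circ(0) = 1$). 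Each of the three cases is then obtained by a different limit.

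For the case $r = r_1$, I would let $r_1 \to 0^+$ in \eqref{A=Span}. Using the limits above, the right-hand side reduces to $a^\bullet(\mu r_2)\, u(x)$, and the identity becomes $M^\bullet(B_{r_2}(x), u) = a^\bullet(\mu r_2)\, u(x)$, valid for every $x \in D$ and every admissible $B_{r_2}(x)$. This is \eqref{MM'}, and the first assertion of Theorem~4.2 yields that $u$ is $\mu$-panharmonic in $D$. For the case $r = r_2$, the same $r_1 \to 0^+$ passage reduces \eqref{A=Span} to $a^\circ(\mu r_2)\, M^\bullet(B_{r_2}(x), u) = a^\bullet(\mu r_2)\, M^\circ(S_{r_2}(x), u)$, which is identity \eqref{MMtil} with $r = r_2$; applied for every $x \in D$ and every admissible $B_{r_2}(x)$, the second assertion of Theorem~4.2 gives $\mu$-panharmonicity of $u$.

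Finally, when $r$ attains every value in $(r_1, r_2)$, I would let $r \to r_2^-$ in \eqref{A=Span}, keeping $r_1$ and $r_2$ fixed. Continuity of $u$ makes $r \mapsto M^\circ(S_r(x), u)$ continuous, and since $a^\circ$ is continuous and satisfies $a^\circ(\mu r) \ge 1$ on $[0, \infty)$, the scalar coefficient on the right is also continuous in $r$; the limiting identity is \eqref{A=Span} with $r = r_2$, which has already been treated. The argument is essentially mechanical once the limits of the various ingredients are justified from continuity of $u$ and of $a^\bullet, a^\circ$. I do not anticipate a substantive analytic obstacle, since Theorem~4.2 already supplies exactly the pointwise converses needed; the only item requiring some bookkeeping is checking that the combinations $a^\bullet(\mu r_1)\,|B_{r_1}(x)|/|A_x(r_1,r_2)|$ and $a^\bullet(\mu r_2)\,|B_{r_2}(x)|/(a^\circ(\mu r_1)\,|A_x(r_1,r_2)|)$ tend to $0$ and to $a^\bullet(\mu r_2)$ respectively as $r_1 \to 0$, which is routine.
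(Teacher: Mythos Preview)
Your proposal is correct and follows essentially the same approach as the paper's proof: in each of the three cases you take the same limits ($r_1 \to 0$ for $r=r_1$ and $r=r_2$, and $r \to r_2$ first for the third case) and invoke the same two assertions of Theorem~4.2. The additional bookkeeping you spell out (the limits of the scalar coefficients and the values $a^\bullet(0)=a^\circ(0)=1$) is accurate and only makes explicit what the paper leaves implicit.
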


\begin{proof}
Following the proof of Theorem 1.3, we begin with the case when $r = r_1$. In view
of continuity and the behaviour of $a^\circ$, letting $r_1 \to 0$ in
\eqref{A=Span}, we obtain
\[ M^\bullet (B_{r_2} (x), u) = a^\bullet (\mu r_2) \, u (x) \, , 
\]
which holds for every $x \in D$ and every admissible $B_{r_2} (x)$. Hence $u$ is
$\mu$-panharmonic in $D$ by the first assertion of Theorem~4.2.

Assuming that $r = r_2$ in identity \eqref{A=Span}, we see that in the limit $r_1
\to 0$ it turns into \eqref{MMtil} with $r = r_2$, which holds for every $x \in D$
and every admissible $B_{r_2} (x)$. Then $u$ is $\mu$-panharmonic in $D$ by the
second assertion of Theorem~4.2.

If $r$ attains each value in $(r_1, r_2)$, then letting $r \to r_2$ in the original
identity \eqref{A=Span}, we arrive at the same identity with $r = r_2$. Then the
considerations of the previous paragraph are applicable, thus yielding that $u$ is
$\mu$-panharmonic in $D$ by the second assertion of Theorem~4.2.
\end{proof}

In conclusion of this section, we  turn to real-valued metaharmonic functions; the
term is just an abbreviation for `$C^2$-solution to the Helmholtz equation':
\begin{equation}
\nabla^2 u + \lambda^2 u = 0 , \quad \lambda \in \RR \setminus \{0\} \, .
\label{Hh}
\end{equation}
I.~N. Vekua introduced this term in 1943, in his still widely cited article, which
was also published as Appendix~2 to the monograph \cite{Ve2}.

It occurs that the mean value properties of metaharmonic functions have the same
form as those of panharmonic ones, but the functions $I_{(m-2)/2}$ and $I_{m/2}$
used in Theorem~4.1 and Corollary~4.1 for defining the coefficients $a^\circ$ and
$a^\bullet$, respectively, must be changed to the corresponding Bessel functions
which decay at infinity oscillating about the zero level. In view of this similarity,
we just formulate the analogue of Theorem~4.3 and explain why an extra assumption is
imposed in it.

\begin{theorem}
Let $D \subset \RR^m$, $m \geq 2$, be a bounded domain. If $u \in C^0 (D)$
satisfies the identity
\begin{equation}
M^\bullet (A_x (r_1, r_2), u) = \frac{a^\bullet (\lambda r_2) |B_{r_2} (x)| -
a^\bullet (\lambda r_1) |B_{r_1} (x)|}{a^\circ (\lambda r) |A_x (r_1, r_2)|} M^\circ
(S_{r} (x), u) \, , \ \ where \ \lambda > 0 ,
\label{A=Span}
\end{equation}
for every $x \in D$ and every $A_x (r_1, r_2)$ with $j_{(m-2)/2, 1} / \lambda > r_2
> r_1 > 0$ such that $B_{r_2} (x)$ is admissible, whereas $r$ either attains each
value in $(r_1, r_2)$ or is equal to $r_i$, $i = 1$ or $2$, then $u$ is metaharmonic
in $D$. Here $j_{(m-2)/2, 1}$ denotes the first positive zero of the Bessel
function $J_{(m-2)/2}$.
\end{theorem}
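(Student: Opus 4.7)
The plan is to follow the three-case strategy that proves Theorem~4.3 verbatim, with the panharmonic converse Theorem~4.2 replaced throughout by its metaharmonic analogue. As the author notes in the paragraph preceding the statement, for metaharmonic $u$ the spherical and ball mean identities \eqref{MM}, \eqref{MM'} and the coupled identity \eqref{MMtil} retain their form, with $I_{(m-2)/2}$ and $I_{m/2}$ in the definitions of $a^\circ(\lambda r)$ and $a^\bullet(\lambda r)$ replaced by $J_{(m-2)/2}$ and $J_{m/2}$; the converses of these identities in the style of Theorem~4.2 likewise remain true provided the admissible radii $r$ satisfy $r < j_{(m-2)/2, 1}/\lambda$, so that $a^\circ(\lambda r)$ is bounded away from zero.

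First I would handle $r = r_1$: letting $r_1 \to 0$ in \eqref{A=Span}, continuity of $u$ gives $M^\circ(S_{r_1}(x), u) \to u(x)$, while $a^\bullet(\lambda r_1), a^\circ(\lambda r_1) \to 1$, $|B_{r_1}(x)| \to 0$, and $|A_x(r_1, r_2)| \to |B_{r_2}(x)|$. The identity collapses to
\[
M^\bullet(B_{r_2}(x), u) = a^\bullet(\lambda r_2)\, u(x),
\]
valid for every admissible $B_{r_2}(x)$ with $r_2 < j_{(m-2)/2, 1}/\lambda$, and the first assertion of the metaharmonic analogue of Theorem~4.2 forces $u$ to be metaharmonic in $D$. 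Next, for $r = r_2$, the same limit $r_1 \to 0$ turns \eqref{A=Span} into
\[
a^\circ(\lambda r_2) M^\bullet(B_{r_2}(x), u) = a^\bullet(\lambda r_2) M^\circ(S_{r_2}(x), u),
\]
i.e.\ the metaharmonic version of \eqref{MMtil} at $r = r_2$, so the second assertion of the analogue of Theorem~4.2 yields the conclusion. The third case, in which $r$ attains every value in $(r_1, r_2)$, reduces to the second by sending $r \to r_2$; the inequality $r_2 < j_{(m-2)/2, 1}/\lambda$ keeps $a^\circ(\lambda r_2)$ nonzero, justifying the passage to the limit.

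The hypothesis $r_2 < j_{(m-2)/2, 1}/\lambda$ plays a double role. First, it ensures that the quotient in \eqref{A=Span} is well defined, since $a^\circ(\lambda r)$ vanishes precisely at $\lambda r = j_{(m-2)/2, 1}$. More substantively, the same bound is intrinsic to the metaharmonic analogue of Theorem~4.2 itself: past the first zero the sign of $a^\circ$ flips, the coupling \eqref{MMtil} can be satisfied by functions that are not metaharmonic, and the Beckenbach--Reade differentiation trick underlying the proof of Theorem~2.1 loses its force.

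The main obstacle is thus not the three-case reduction sketched above, which is essentially cosmetic, but the supporting metaharmonic converse of Theorem~4.2, which the paper does not state explicitly. To establish it I would mimic the argument of \cite{Ku4}, differentiating the ball-mean identity in $r$ and inverting the operator $r \mapsto (\lambda r/2)^{-\nu} J_\nu(\lambda r)$ on the interval $[0, r_2]$. The delicate point is the oscillatory behaviour of $J_\nu$: whereas the monotone growth of $I_\nu$ makes the panharmonic inversion automatic, the metaharmonic case requires the explicit zero-avoidance restriction to retain invertibility, which is exactly why $r_2 < j_{(m-2)/2, 1}/\lambda$ must be imposed in the statement of Theorem~4.4.
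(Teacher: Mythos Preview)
Your proposal is correct and matches the paper's approach exactly: the paper does not give an explicit proof of Theorem~4.4 but merely says the result is formulated ``in view of this similarity'' with Theorem~4.3 and explains that the bound $r_2 < j_{(m-2)/2,1}/\lambda$ keeps $a^\circ(\lambda r)$ from vanishing. Your three-case reduction via $r_1\to 0$ is precisely the proof of Theorem~4.3 transplanted to the metaharmonic setting, and your identification of the metaharmonic analogue of Theorem~4.2 (with the zero-avoidance restriction) as the needed supporting ingredient is the only nontrivial point, which the paper leaves implicit.
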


The assumption $j_{(m-2)/2, 1} / \lambda > r_2$ is imposed to guarantee that the
first factor in the denominator does not vanish.

{\centering \section{Weighted means of pan- and metaharmonic functions} }

In this section, the presentation is restricted to functions of two variables which
facilitates calculations. Let $\partial / \partial n_y$ denote the derivative with
respect to the exterior unit normal $n_y$ at $y \in S_r (x)$. For $w \in C^2 (D)$
and any admissible disc $B_r (x)$ we have
\[ 2 \pi w (x) = \int_{D_r (x)} \nabla^2 w (y) \log |x-y| \, \D y +
\int_{S_r (x)} \left[ w (y) \frac{\partial \log |x-y|}{\partial n_y} -
\frac{\partial w}{\partial n_y} \log |x-y| \right] \D S_y \, .
\]
This identity usually serves as the initial point of the standard proof establishing
mean value properties of harmonic functions. However, it also implies the
representation
\begin{equation}
w (x) = \frac{1}{2 \pi r} \int_{S_r (x)} w (y) \, \D S_y - \frac{1}{2 \pi} \int_{D_r
(x)} \nabla^2 w (y) \log \frac{r}{|x-y|} \, \D y \, , \label{Ev}
\end{equation}
which yields the following weighted version of identity \eqref{MM'}.

\begin{theorem}
Let $D$ be a domain in $\RR^2$. If $u$ is panharmonic in $D$, then
\begin{equation}
a (\mu r) \, u (x) = \frac{1}{\pi r^2} \int_{B_r (x)} u (y) \log \frac{r}{|x-y|} \,
\D y \, , \quad a (t) = \frac{2 \, [I_0 (t) - 1]}{t^2} \, ,
\label{MW'}
\end{equation}
for every admissible disc $B_r (x)$.
\end{theorem}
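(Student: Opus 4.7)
The plan is to combine the Green-type representation \eqref{Ev} with the panharmonic equation $\nabla^2 u = \mu^2 u$, and then eliminate the spherical mean that appears in \eqref{Ev} by invoking the two-dimensional specialisation of Theorem~4.1.

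First I would substitute $w = u$ into \eqref{Ev}; this is legitimate because, as a panharmonic function, $u \in C^2(D)$, and every admissible disc $B_r(x)$ has its closure in $D$. Using $\nabla^2 u = \mu^2 u$ to rewrite the volume term, and recognising that the boundary term is precisely $M^\circ(S_r(x), u)$ (since $|S_r| = 2\pi r$ in two dimensions), one obtains
\[
u(x) = M^\circ(S_r(x), u) - \frac{\mu^2}{2\pi} \int_{B_r (x)} u(y) \log \frac{r}{|x-y|} \, \D y.
\]

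Second, I would specialise Theorem~4.1 to $m = 2$. The prefactor $\Gamma(m/2)/(\mu r / 2)^{(m-2)/2}$ collapses to $1$ and $I_{(m-2)/2} = I_0$, so $a^\circ(\mu r) = I_0(\mu r)$ and therefore $M^\circ(S_r(x), u) = I_0(\mu r) \, u(x)$. Substituting this into the previous display, gathering the $u(x)$ terms, and dividing by $\pi r^2$ yields
\[
\frac{1}{\pi r^2} \int_{B_r (x)} u(y) \log \frac{r}{|x-y|} \, \D y = \frac{2\,[I_0(\mu r) - 1]}{(\mu r)^2} \, u(x),
\]
which is precisely \eqref{MW'}.

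There is no serious obstacle; the argument is a single substitution followed by a rearrangement. As a consistency check, the expansion $I_0(t) = 1 + t^2/4 + O(t^4)$ as $t \to 0$ gives $a(\mu r) \to 1/2$, matching the coefficient $m/|B_r| = 2/(\pi r^2)$ in the weighted harmonic identity \eqref{har} specialised to two dimensions. If anything requires care, it is only the verification that the logarithmic singularity of the integrand at $y = x$ is harmless, which follows from the local boundedness of $u$ and the integrability of $\log(r/|x-y|)$ on $B_r(x)$.
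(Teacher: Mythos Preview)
Your argument is correct and follows essentially the same route as the paper: substitute $u$ into the Green-type representation \eqref{Ev}, replace $\nabla^2 u$ by $\mu^2 u$, eliminate the spherical mean via the two-dimensional case $a^\circ(t)=I_0(t)$ of Theorem~4.1, and rearrange. The only (harmless) slip is the phrase ``dividing by $\pi r^2$'': the actual step is to multiply through by $2/(\mu^2 r^2)$, which is exactly what your displayed conclusion reflects.
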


\begin{proof}
Substituting $u$ into \eqref{Ev} and taking into account identity \eqref{MM}, where
$a^\circ (t) = I_0 (t)$ since $m = 2$, and equation \eqref{Hh}, we obtain
\[ 2 u (x) = 2 a^\circ (\mu r) \, u (x)- \frac{(\mu r)^2}{\pi r^2} \int_{D_r
(x)} u (y) \log \frac{r}{|x-y|} \, \D y \, ,
\]
after multiplying both sides by two. Now \eqref{MW'} follows by rearranging.
\end{proof}

The behaviour of the logarithmic weight in identity \eqref{MW'} is quite simple: it
is a positive function of $y$ within $B_r (x)$, growing from zero attained at $y \in
S_r (x)$ to infinity as $|x-y| \to 0$, and is negative when $y \notin \overline{B_r
(x)}$.

Let us consider some properties of $a$. In view of the definition of $I_0$, we see
that
\[ a (0) = \lim_{t \to +0} a (t) = 1/2 \, ,
\]
and $a (t)$ increases monotonically from this value to infinity similar to
$a^\bullet (t)$. However, the rate of growth as $t \to \infty$ is less for $a (t)$
than for $a^\bullet (t) = 2 \, t^{-1} I_1 (t)$, because $I_0$ and $I_1$ have the
same rate of growth. Moreover, we have that
\[ a^\bullet (t) - a (t) = 2 \left[ t I_1 (t) - I_0 (t) + 1 \right] / t^2
\]
is positive for all $t \in [0, \infty)$, which follows from properties of $I_0$ and
$I_1$. Indeed,
\[ \left[ t I_1 (t) - I_0 (t) + 1 \right]' = I_1 (t) + t I_1' (t)- I_1 (t) = t \,
[ I_2 (t) + I_0 (t) ] / 2 \, ,
\]
which trough straightforward calculations implies that $a^\bullet (t) - a (t)$
increases on $(0, \infty)$ and
\[ \lim_{t \to +0} [ a^\bullet (t) - a (t) ] = \lim_{t \to +0} \frac{\left[ t I_1 (t)
- I_0 (t) + 1 \right]'}{t} = \frac{1}{2} .
\]

An immediate consequence of identity \eqref{MW'} and the fact that $a (t) > 1/2$ for
$t > 0$ is the following.

\begin{corollary}
Let $D$ be a domain in $\RR^2$, and let $u$ be a $\mu$-panharmonic in $D$ for some
$\mu > 0$. If $u \geq 0$ does not vanish identically in $D$, then
\begin{equation}
\frac{1}{2} \, u (x) < \frac{1}{\pi r^2} \int_{B_r (x)} u (y) \log \frac{r}{|x-y|}
\, \D y \label{ineq}
\end{equation}
for every admissible disc $B_r (x)$.
\end{corollary}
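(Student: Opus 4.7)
The plan is straightforward, since almost all of the work has already been done in the discussion of the function $a$ just before the corollary. First I would invoke Theorem 5.1, which tells us that for every admissible disc $B_r(x)$,
\[
\frac{1}{\pi r^2} \int_{B_r (x)} u (y) \log \frac{r}{|x-y|} \, \D y \; = \; a(\mu r)\, u(x).
\]
Since $\mu > 0$ and $r > 0$, we have $\mu r > 0$; the paragraph preceding the corollary records that $a(0) = 1/2$ and $a$ is strictly increasing on $[0,\infty)$, so $a(\mu r) > 1/2$.

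Combining these two facts immediately yields $a(\mu r)\, u(x) \geq \tfrac12 u(x)$, with strict inequality provided $u(x) > 0$. So the only real issue is to upgrade $u(x) \geq 0$ (hypothesis) to $u(x) > 0$ for every $x \in D$.

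This upgrade is the one step that is not tautological from material already proved. I would obtain it from the strong minimum principle applied to the elliptic operator $L = \nabla^2 - \mu^2$. The zero-order coefficient $-\mu^2$ is nonpositive, so the standard maximum--minimum principle applies: a nonnegative solution of $Lu = 0$ on a connected domain that attains the value $0$ at an interior point must be identically zero. Since $u \not\equiv 0$ by assumption, this forces $u(x) > 0$ for every $x \in D$, and the strict inequality \eqref{ineq} follows at once.

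The main obstacle, such as it is, lies in this last step: one must ensure that the strong minimum principle is available for the modified Helmholtz equation. This is classical for linear elliptic operators with nonpositive zeroth-order coefficient, but as the paper has not explicitly cited it, I would either give a one-line reference (for instance to Gilbarg--Trudinger) or note that for panharmonic $u$ the real-analyticity together with unique continuation gives the same conclusion: $u \geq 0$ vanishing at one point cannot hold without $u \equiv 0$ in $D$.
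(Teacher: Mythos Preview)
Your argument is correct and follows exactly the line the paper indicates (the paper's entire ``proof'' is the sentence preceding the corollary: apply Theorem~5.1 and use $a(t)>1/2$ for $t>0$). You have gone further than the paper by noticing that strict inequality requires $u(x)>0$ at every centre, a point the paper passes over in silence.

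Two small remarks on how you handle that positivity step. First, you need not reach outside the paper for the strong minimum principle: Theorem~5.1 itself does the job. If $u(x_0)=0$, then for any admissible disc $B_r(x_0)$ the identity gives
\[
0 \;=\; a(\mu r)\,u(x_0) \;=\; \frac{1}{\pi r^2}\int_{B_r(x_0)} u(y)\,\log\frac{r}{|x_0-y|}\,\D y,
\]
and since both $u\ge 0$ and the logarithmic weight is strictly positive on $B_r(x_0)$, the integrand vanishes a.e., hence $u\equiv 0$ on $B_r(x_0)$ by continuity. The zero set of $u$ is therefore open as well as closed in $D$, so $u\equiv 0$, contradicting the hypothesis. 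This keeps the argument self-contained.

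Second, your proposed alternative (``real-analyticity together with unique continuation'') does not by itself yield the conclusion: a real-analytic nonnegative function can certainly vanish at an isolated point, and unique continuation only rules out vanishing on an open set. You would still need a minimum-principle-type step to pass from a single zero to an open set of zeros, which is precisely what the argument above (or Gilbarg--Trudinger) supplies.
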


Also, Theorem 5.1 allows us to formulate the following analogue of Conjecture~3.1.

\begin{conjecture}
Let $D$ be a bounded domain in $\RR^2$. If $u \in C^0 (D)$ satisfies the identity
\begin{equation*}
\frac{2}{|A (r_1, r_2)|} \int_{A_x (r_1, r_2)} u (y) \log \frac{r}{|x-y|} \, \D y =
a (\mu r) \, M^\circ (S_{r} (x), u) \quad \mbox{with} \ \mu > 0
\end{equation*}
for every $x \in D$ and every $A_x (r_1, r_2)$ with $r_2 > r_1 > 0$ such that
$B_{r_2} (x)$ is admissible, whereas $r$ either attains each value in $(r_1, r_2)$
or is equal to $r_i$, $i = 1$ or $2$, then $u$ is $\mu$-panharmonic in $D$.
\end{conjecture}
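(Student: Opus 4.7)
The plan is to mimic the three-case scheme from the proof of Theorem~4.3, with Theorem~5.1 taking the role of identity~(\ref{MM'}) and a converse of Theorem~5.1 taking the role of the second assertion of Theorem~4.2. The whole approach hinges on first establishing the following weighted converse: any $u \in C^0(D)$ satisfying $a(\mu r)\,u(x) = \frac{1}{\pi r^2}\int_{B_r(x)} u(y)\log\frac{r}{|x-y|}\,\D y$ for every admissible disc $B_r(x)\subset D$ must be $\mu$-panharmonic in $D$. As the author points out at the end of Section~3, this converse is currently unproven and is the main technical obstacle of the whole enterprise. A natural attack would imitate the Beckenbach--Reade differentiation argument from the proof of Theorem~2.1: differentiate the weighted ball identity in $r$ and combine the resulting relation with the Taylor expansion $a(t) = \tfrac{1}{2} + O(t^2)$ near $t = 0$ to isolate the equation $\nabla^2 u - \mu^2 u = 0$ in the limit $r \to 0^+$.

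Granted this preliminary, the three cases proceed in parallel with the proof of Theorem~4.3. For $r = r_2$, let $r_1 \to 0^+$ in~(\ref{con}). Since $\log\frac{r_2}{|x-y|}$ is integrable on $B_{r_2}(x)$ in two dimensions, dominated convergence yields
\[
\frac{2}{\pi r_2^2}\int_{B_{r_2}(x)} u(y)\log\frac{r_2}{|x-y|}\,\D y = a(\mu r_2)\,M^\circ(S_{r_2}(x), u)
\]
for every admissible $B_{r_2}(x)$. Using $a(0) = \tfrac{1}{2}$, the continuity of $u$, and the elementary evaluation $\frac{1}{\pi r_2^2}\int_{B_{r_2}(x)} \log\frac{r_2}{|x-y|}\,\D y = \tfrac{1}{2}$, one then sends $r_2 \to 0^+$ in this family of identities to identify the spherical mean on the right as a definite multiple of $u(x)$; together with the preliminary converse this yields $\mu$-panharmonicity of $u$. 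For $r \in (r_1, r_2)$, both sides of~(\ref{con}) are continuous in $r$ (through continuity of $u$ and of $a$), so the limit $r \to r_2^-$ reduces to the case just treated.

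The case $r = r_1$ is subtler than in the proof of Theorem~1.3 because the weight $\log\frac{r_1}{|x-y|}$ diverges pointwise to $-\infty$ as $r_1 \to 0^+$. Writing $\log\frac{r_1}{|x-y|} = \log r_1 - \log|x-y|$ and multiplying through by the prefactor $\frac{2}{\pi(r_2^2 - r_1^2)}$, the divergent contribution $\frac{2\log r_1}{\pi(r_2^2 - r_1^2)}\int_{A_x(r_1, r_2)} u(y)\,\D y$ must remain bounded as $r_1 \to 0^+$ because the right-hand side tends to $a(0)u(x) = \tfrac{1}{2}u(x)$; tracking this constraint forces $\int_{B_{r_2}(x)} u(y)\,\D y = 0$ for every admissible $B_{r_2}(x)$, whence $u \equiv 0$ on $D$ by differentiation in $r_2$, and the zero function is trivially $\mu$-panharmonic. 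Altogether, the substantive content concentrates in the cases $r = r_2$ and $r \in (r_1, r_2)$, and the single hardest step is the weighted converse of Theorem~5.1.
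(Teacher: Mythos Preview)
The statement you attack is labelled a \emph{conjecture} in the paper; the author offers no proof, and in the parallel harmonic setting (Conjecture~3.1) explicitly says that the underlying weighted converse is open and that the conjecture ``will immediately follow in the same way as Theorem~1.3'' once that converse is available. So there is no paper proof to compare with; you have correctly located both the intended three-case scheme and the central obstruction.

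Even so, the specific ``preliminary converse'' you formulate is not the one your $r=r_2$ branch actually requires. After $r_1\to 0$ you obtain
\[
\frac{2}{\pi r_2^{2}}\int_{B_{r_2}(x)} u(y)\,\log\frac{r_2}{|x-y|}\,\D y \;=\; a(\mu r_2)\,M^{\circ}(S_{r_2}(x),u),
\]
whose right-hand side carries the \emph{spherical mean}, not $u(x)$, and whose left-hand side has an extra factor~$2$ relative to Theorem~5.1. Thus the statement ``weighted ball mean $=a(\mu r)\,u(x)\Rightarrow$ panharmonic'' (a Koebe-type converse, analogous to the \emph{first} assertion of Theorem~4.2) does not apply here; what you need is a weighted analogue of the \emph{second} assertion of Theorem~4.2, i.e.\ a Beckenbach--Reade-type converse linking the weighted ball mean to the spherical mean. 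Your attempt to close the gap by sending $r_2\to 0$ only yields the tautology $\tfrac12\,u(x)=\tfrac12\,u(x)$ and recovers nothing about panharmonicity for positive radii. Your $r=r_1$ analysis is internally consistent and does show that this branch of the hypothesis forces $u\equiv 0$ (hence is trivially panharmonic), but note that this simultaneously says nonzero panharmonic functions fail the $r=r_1$ identity, so that branch of the conjecture is essentially vacuous rather than substantive.
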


Let us turn to real-valued metaharmonic functions of two variables. In this case
the coefficients in identities analogous to \eqref{MM} and \eqref{MM'} are $a^\circ
(t) = J_0 (t)$ and $a^\bullet (t) = 2 \, t^{-1} J_1 (t)$, respectively. It is
interesting to find out how the logarithmic weight changes the last coefficient.
Indeed, minor amendments in the proof of Theorem~5.1 yield the following.

\begin{theorem}
Let $D$ be a domain in $\RR^2$. If $u$ is metaharmonic in $D$, then
\begin{equation}
\tilde a (\lambda r) \, u (x) = \frac{1}{\pi r^2} \int_{B_r (x)} u (y) \log
\frac{r}{|x-y|} \, \D y \, , \quad \tilde a (t) = \frac{2 \, [1 - J_0 (t)]}{t^2} \,
, \label{New}
\end{equation}
for every admissible disc $B_r (x)$.
\end{theorem}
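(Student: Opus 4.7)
\medskip

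\noindent\textbf{Proof plan for Theorem~5.2.} The plan is to mirror the proof of Theorem~5.1, adjusting for the two changes forced by replacing the modified Helmholtz equation with the Helmholtz equation: the profile of the spherical mean changes from $I_0$ to $J_0$, and $\nabla^2 u = +\mu^2 u$ is replaced by $\nabla^2 u = -\lambda^2 u$. First I would substitute a metaharmonic $u$ into the Green-type representation \eqref{Ev}. The boundary term there coincides with $M^\circ(S_r(x), u)$, which by the two-dimensional analogue of \eqref{MM} for metaharmonic functions reduces to $J_0(\lambda r)\, u(x)$.

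Next I would process the volume term using \eqref{Hh} in the form $\nabla^2 u = -\lambda^2 u$. After substitution and multiplication by two, \eqref{Ev} becomes
\[
2 u(x) = 2 J_0(\lambda r)\, u(x) + \frac{(\lambda r)^2}{\pi r^2} \int_{B_r(x)} u(y) \log \frac{r}{|x-y|} \, \D y .
\]
Transposing the $u(x)$ terms to the left and dividing by $(\lambda r)^2$ then produces the coefficient $\tilde a(\lambda r) = 2[1 - J_0(\lambda r)]/(\lambda r)^2$ in front of $u(x)$, which is precisely identity \eqref{New}.

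The only step requiring genuine care — and the one I would double-check first — is the sign bookkeeping. In the panharmonic derivation, $\nabla^2 u = +\mu^2 u$ produced the combination $[I_0(\mu r) - 1]$; here the opposite sign in $\nabla^2 u = -\lambda^2 u$ flips this to $[1 - J_0(\lambda r)]$, which is what makes the right-hand side of \eqref{New} positive for small $\lambda r$. As a consistency check one may expand $J_0(t) = 1 - t^2/4 + O(t^4)$ near zero, which gives $\tilde a(0^+) = 1/2$, matching $a(0^+) = 1/2$ from Theorem~5.1 and confirming that the harmonic limit $\lambda \to 0$ recovers the two-dimensional case of Theorem~3.2. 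No real obstacle is anticipated, since both ingredients — the Green-type identity \eqref{Ev} and the spherical mean value formula for metaharmonic functions — are already available, and the ``minor amendments'' announced in the paper consist essentially of these two sign changes.
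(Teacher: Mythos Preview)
Your proposal is correct and follows exactly the approach the paper intends: the paper's ``proof'' of Theorem~5.2 consists solely of the remark that ``minor amendments in the proof of Theorem~5.1 yield'' the result, and those amendments are precisely the two you identify---replacing $a^\circ(\mu r)=I_0(\mu r)$ by $J_0(\lambda r)$ in the boundary term of \eqref{Ev}, and flipping the sign in the volume term because $\nabla^2 u = -\lambda^2 u$ rather than $+\mu^2 u$. Your sign bookkeeping and the consistency check $\tilde a(0^+)=1/2$ are both correct.
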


The behaviour of $\tilde a$ is as follows: 
\[ \tilde a (0) = \lim_{t \to +0} \tilde a (t) = 1/2 ,
\]
whereas $\tilde a (t)$ asymptotes zero as $t \to +\infty$ decreasing
nonmonotonically, but remaining positive. The latter property of $\tilde a (t)$
distinguishes it from $2 \, t^{-1} J_1 (t)$ (the coefficient in the area mean value
identity without weight), which has infinitely many zeros.

Also, an analogue of Conjecture~5.1 can be easily formulated for metaharmonic
functions.

\renewcommand{\refname}{
\begin{center}{\Large\bf References}
\end{center}}
\makeatletter
\renewcommand{\@biblabel}[1]{#1.\hfill}
\makeatother

\end{document}